\documentclass{publicadapt}

\usepackage{amssymb}%
\usepackage{amsmath}%
\usepackage{amsthm}%
\usepackage{bbm}
\usepackage{tikz-cd}
\usepackage[cal=boondox,scr=boondoxo]{mathalfa}
\usepackage{genyoungtabtikz}
\usepackage{ytableau}
\usepackage{xcolor}%

\allowdisplaybreaks%

{\theoremstyle{plain}%
 \newtheorem{theorem}{Theorem}

}
{\theoremstyle{remark}

}
{\theoremstyle{definition}
\newtheorem{definition}{Definition}
\newtheorem{example}{Example}
}

 \title{Semisimple algebras related to immaculate tableaux}

 \author{John M.\ Campbell}
 \address{Department of Mathematics and Statistics, Dalhousie University, Halifax, Nova Scotia, Canada}
 \email{jh241966@dal.ca}
 \thanks{The author was supported by a Killam Postdoctoral Fellowship from the Killam Trusts.}

 \date{May, 05, 2025}

 \keywords{Semisimple algebra, group algebra, Young tableau, 
 immaculate tableau, matrix unit, Schur function, RSK correspondence, symmetric group} 
 \subjclass{05E10, 16D60}

\begin{document}

\begin{abstract}
 Given a direct sum $A$ of full matrix algebras, if there is a combinatorial interpretation associated with both the dimension of $A$ and 
 the dimensions of the irreducible $A$-modules, then this can be thought of as providing an analogue of the famous Frobenius--Young 
 identity $n! = \sum_{\lambda \vdash n} ( f^{\lambda} )^{2}$ derived from the semisimple structure of the symmetric group algebra 
 $ \mathbb{C}S_{n}$, letting $f^{\lambda}$ denote the number of Young tableaux of partition shape $\lambda \vdash n$. By letting 
 $g^{\alpha}$ denote the number of standard immaculate tableaux of composition shape $\alpha \vDash n$, we construct an algebra $ 
 \mathbb{C}\mathcal{I}_{n}$ with a semisimple structure such that $\dim \mathbb{C}\mathcal{I}_{n} = \sum_{\alpha \vDash n} ( 
 g^{\alpha} )^{2}$ and such that $\mathbb{C}\mathcal{I}_{n} $ contains an isomorphic copy of $\mathbb{C}S_{n}$. We bijectively 
 prove a recurrence for $\dim \mathbb{C}\mathcal{I}_{n}$ so as to construct a basis of $\mathbb{C}\mathcal{I}_{n}$ indexed by 
 permutation-like objects that we refer to as \emph{immacutations}. We form a basis $\mathcal{B}_{n}$ of $\mathbb{C}\mathcal{I}_{n}$ 
 such that $\mathbb{C} \mathcal{B}_n$ has the structure of a monoid algebra in such a way so that $\mathcal{B}_n$
 is closed under the  multiplicative operation of $\mathbb{C} \mathcal{I}_n$,  yielding a monoid structure on the
 set of order-$n$ immacutations.  
\end{abstract}

 \maketitle

\section{Introduction}
 The representation theory of the symmetric group $S_{n}$ is of basic importance in many areas of mathematics and may be seen as 
 providing a foundation for a rich interplay among combinatorial and algebraic objects. Young's construction of matrix units for the 
 irreducible modules of the group algebra $\mathbb{C} S_{n}$ gives rise to one of the most famous formulas in the entire discipline of 
 combinatorics, namely, the \emph{Frobenius--Young identity} such that 
\begin{equation}\label{FrobeniusYoung}
 n! = \sum_{\lambda \vdash n} ( f^{\lambda} )^{2} 
\end{equation}
 for a positive integer $n$ and for integer partitions $\lambda$ of $n$ and for the number $f^{\lambda}$ of standard Young tableaux of 
 shape $\lambda$, referring to Section \ref{sectionbackground} for preliminaries and for background material. The importance of the 
 identity in \eqref{FrobeniusYoung} within both combinatorics and representation theory leads us to consider extensions and variants 
 of \eqref{FrobeniusYoung} from both combinatorial and representation-theoretic perspectives. This has led us to formulate and prove 
 an analogue of the Frobenius--Young identity using a generalization of standard Young tableaux arising from the Pieri rule for the 
 \emph{immaculate basis} introduced in a seminal paper by Berg et al.\ \cite{BergBergeronSaliolaSerranoZabrocki2014}. 

 By the semisimplicity of the group algebra $\mathbb{C} S_{n}$, and by Frobenius' Fundamental Theorem of Representation Theory 
 \cite[\S1]{GarsiaEgecioglu2020} or an equivalent version of 
 Maschke's Theorem, 
 there exists a basis of $\mathbb{C} S_{n}$ satisfying the matrix unit multiplication rules (reviewed in 
 Section \ref{sectionbackground} below). The irreducible $\mathbb{C} S_{n}$-modules are indexed by partitions $\lambda \vdash n$, 
 and, for such a partition $\lambda$, every irreducible $\mathbb{C} S_{n}$-module is of dimension $f^{\lambda}$ and multiplicity 
 $f^{\lambda}$. So, by taking all of the $(f^{\lambda})^{2}$ matrix units among the irreducible $\mathbb{C} S_{n}$-modules 
 corresponding to a given partition $\lambda$ of $n$, and by then taking the matrix units corresponding to the irreducible modules that 
 remain, we obtain a basis of $\mathbb{C}S_{n}$ of size $ \sum_{\lambda \vdash n} ( f^{\lambda} )^{2}$, or, equivalently, of size $n!$. 
 In an equivalent way, by letting $ \mathcal{M}_{m}(\mathbb{C})$ denote the algebra consisting of $m \times m$ matrices with 
 complex entries, the identity in \eqref{FrobeniusYoung} may be obtained by taking the dimension of each side of 
 the isomorphic equivalence 
\begin{equation}\label{decomposeCSn}
 \mathbb{C} S_{n} \cong \bigoplus_{\lambda \vdash n} \mathcal{M}_{f^{\lambda}}(\mathbb{C}), 
\end{equation}
 with the decomposition of $ \mathbb{C} S_{n}$
 in \eqref{decomposeCSn} being referred to as the \emph{Wedderburn decomposition}
 of $ \mathbb{C} S_{n}$. 
 The rich history associated with the bijective proof of \eqref{FrobeniusYoung} via the RSK correspondence motivates the development 
 of research topics related to extensions or variants of \eqref{FrobeniusYoung}. In this direction, since noncommutative Schur-like 
 bases give rise, via Pieri rules, to composition tableau generalizations of Young tableaux, this raises questions as to what would be 
 appropriate as an analogue of \eqref{FrobeniusYoung} involving composition tableaux and how such an analogue, in turn, could 
 extend the semisimple structure indicated in \eqref{decomposeCSn}. 

 Given a family of combinatorial objects related to Schur functions, such as the tableaux arising in product rules associated with the 
 Schur basis, constructing analogues of such combinatorial objects with the use of noncommutative versions of Schur functions is 
 often motivated by applications related to Schur positivity and to representation-theoretic problems. Informally, in view of how the 
 Hopf algebra $\textsf{NSym}$ projects onto $\textsf{Sym}$, Schur-like bases of the former Hopf algebra can be thought of as 
 ``containing more information'' in a way that could, ideally, be useful in terms of developing a deeper understanding as to the behavior 
 of commutative Schur functions. In this regard, active research related to the \emph{immaculate basis} of $\textsf{NSym}$ 
 \cite{BergBergeronSaliolaSerranoZabrocki2014} has to do with its close relationship with the Schur basis of $\textsf{Sym}$. 

 Since standard immaculate tableaux provide a natural analogue of standard Young tableaux from both algebraic and combinatorial 
 perspectives, this leads us to introduce an analogue of \eqref{FrobeniusYoung} with the use of standard immaculate tableaux. Again 
 referring to Section \ref{sectionbackground} for background and preliminaries, by letting $\alpha$ denote an integer composition, 
 and by letting $g^{\alpha}$ denote the number of standard immaculate tableaux of shape $\alpha$, a main object of study in this work 
 is the \emph{immaculate algebra} 
\begin{equation}\label{CIndefinition}
 \mathbb{C} \mathcal{I}_{n} := \bigoplus_{\alpha \vDash n} \mathcal{M}_{g^{\alpha}}(\mathbb{C}) 
\end{equation}
 that we introduce. Since $ \mathbb{C} \mathcal{I}_{n}$ is defined by analogy with the direct sum decomposition in 
 \eqref{decomposeCSn}, this raises questions as to how the combinatorics associated with immaculate tableaux could give rise to a 
 product rule for $\mathbb{C} \mathcal{I}_{n}$ that emulates or reflects the operation given by the composition of permutations. 
 Such questions are 
 inspired by past research concerning 
 immaculate tableaux 
 \cite{GaoYang2016,SunHu2018}, 
 immaculate functions 
 \cite{BergBergeronSaliolaSerranoZabrocki2017,Li2018,LoehrNiese2021}, 
 and dual immaculate functions 
 \cite{AllenHallamMason2018,BergBergeronSaliolaSerranoZabrocki2015,BergeronSanchezOrtegaZabrocki2016, 
 Daugherty2024,MasonSearles2021, 
NieseSundaramvanWilligenburgVegaWang2024,
NieseSundaramvanWilligenburgVegaWang2023,
NieseSundaramvanWilligenburgWang2024,NovelliThibonToumazet2018}. 

 Experimentally, using the On-Line Encyclopedia of Integer Sequences \cite{oeis}, we have discovered a recursion for the sequence 
 \begin{equation}\label{dimnumerical}
 ( \dim \mathbb{C} \mathcal{I}_{n} : n \in \mathbb{N} ) = (1, 2, 7, 35, 236, 2037, 21695, 277966, 4198635, 73558135, \ldots), 
\end{equation}
 and we introduce a bijective proof of this recurrence. 
 Our bijective approach gives rise to a combinatorial 
 interpretation for the dimension of $ \mathbb{C} \mathcal{I}_{n} $ in terms of tuples of 
 integer sets that may be seen as analogues of permutations and that we refer to \emph{immacutations} of order $n$. This and the 
 definition in \eqref{CIndefinition} give rise to our analogue 
\begin{equation}\label{immacutationsFY}
 \text{$\#$ of immacutations of order $n$} = \sum_{\alpha \vDash n} (g^{\alpha})^{2} 
\end{equation}
 of the Frobenius--Young identity. 

\section{Background and preliminaries}\label{sectionbackground}
 Since the Frobenius--Young identity and how it can be generalized in relation to noncommutative symmetric functions are central to our 
 work, we begin with a review of both Young tableaux and the algebra $\textsf{Sym}$, as below. 

\subsection{Young tableaux and commutative symmetric functions}
 Much about our terminology and notation concerning symmetric functions is adapted from Macdonald's classic monograph on 
 symmetric functions \cite{Macdonald1995}. In this direction, we set $ \textsf{Sym}^{(n)} = \mathbbm{k}[x_{1}, x_{2}, \ldots, 
 x_{n}]^{S_{n}} $ for indeterminates $x_{1}$, $x_{2}$, $\ldots$, $x_{n}$ and for a field $\mathbbm{k}$ that we set as $\mathbb{Q}$ for 
 convenience, and where $S_{n}$ acts on the polynomial ring $\mathbb{Q}[x_{1}, x_{2}, \ldots, x_{n}]$ 
 by permuting the given variables (cf.\ \cite[\S1.2]{Macdonald1995}). 
 This leads us to a graded ring decomposition, writing
 $$ \textsf{Sym}^{(n)} = \bigoplus_{k \geq 0} \textsf{Sym}^{(n)}_{k}, $$
 letting $ \textsf{Sym}^{(n)}_{k} $ consist of the zero polynomial and homogeneous symmetric polynomials
 of degree $k$. 
 We then construct an inverse limit 
\begin{equation}\label{Symsub}
 \textsf{Sym}_{k} := \lim_{\substack{\longleftarrow\\ n }} \textsf{Sym}_{k}^{(n)}, 
\end{equation}
 and we invite the interested reader to refer 
 to Macdonald's text for details \cite[pp.\ 17--19]{Macdonald1995}. The inverse limit in 
 \eqref{Symsub} leads us to set
 $$ \textsf{Sym} := \bigoplus_{k \geq 0} \textsf{Sym}_{k}. $$
 Define the $r^{\text{th}}$ \emph{elementary symmetric generator} $e_{r}$ 
 so that $e_{0} = 1$ and so that 
 $$ e_{r} = \sum_{i_{1} < i_{2} < \cdots < i_{r}} x_{i_{1}} x_{i_{2}} \cdots x_{i_{r}}. $$
 We then define the \emph{complete homogeneous generator} $h_{m}$ for $m \in \mathbb{N}_{0}$ 
 recursively so that $h_{0} = 1$ and so that 
 $ \sum_{r=0}^{n} (-1)^r e_{r} h_{n-r} = 0$. 
 This leads us to equivalently define $\textsf{Sym}$ so that 
\begin{equation}\label{Symh1h2}
 \textsf{Sym} = \mathbb{Q}[h_{1}, h_{2}, \ldots]. 
\end{equation}
 By setting $\text{deg} \, h_{n} = n$ for positive integers $n$, we thus have that $\textsf{Sym}$ is the free commutative 
 $\mathbb{Q}$-algebra with one generator $h_{n}$ in each degree $n$. 

 An \emph{integer partition} $\lambda$ is a finite tuple of positive integers in weakly decreasing order, and we write $\ell(\lambda)$ in 
 place of the length of this tuple, and write $\lambda = (\lambda_{1}, \lambda_{2}, \ldots, \lambda_{\ell(\lambda)})$. We may identify an 
 integer partition $\lambda$ with a tableau with $\ell(\lambda)$ rows consisting of $\lambda_{i}$ cells in its $i^{\text{th}}$ row for $i 
 \in \{ 1, 2, \ldots, \ell(\lambda) \}$. For a nonempty partition $\lambda$, we write $h_{\lambda} = h_{\lambda_{1}} 
 h_{\lambda_{2}} \cdots h_{\lambda_{\ell(\lambda)}}$, and, for the unique empty partition $()$, we write $h_{()} = h_{0}$. 
 We let $|\lambda|$ denote the sum of the entries of a nonempty partition
 $\lambda$ and adopt the convention 
 whereby the tuple $()$ is such that $|()| = 0$. 
 If $|\lambda| = n$, then we may let this be denoted by writing $\lambda \vdash n$. 
 By letting $ 
 \mathcal{P}$ denote the set of all integer partitions, we thus obtain that the family $\{ h_{\lambda} \}_{\lambda \in \mathcal{P}}$ is a basis 
 of $\textsf{Sym}$, i.e., the \emph{complete homogeneous basis} of $\textsf{Sym}$. The Schur basis 
 $\{ s_{\lambda} \}_{\lambda \in \mathcal{P}}$
 of $\textsf{Sym}$ is typically regarded as \emph{the} basis of $\textsf{Sym}$ 
 and may be defined according to the classical rule
\begin{equation}\label{htosrule}
 h_{\lambda} = \sum_{\mu} K_{\mu, \lambda} s_{\mu}, 
\end{equation}
 for \emph{Kostka coefficients} $K_{\mu, \lambda}$ defined below. 

 The Kostka coefficient $K_{\mu, \lambda}$ is equal to the number of semistandard Young tableaux of weight $\lambda$ and shape 
 $\mu$, where a \emph{semistandard tableau} is a tableau of partition shape labeled with positive integers and with strictly increasing 
 columns and with weakly increasing rows. Similarly, a \emph{standard Young tableau} is a tableau of partition shape such that the 
 labels of this tableau are distinct and consecutive positive integers starting with $1$ and such that the rows and columns are strictly 
 increasing. We henceforth make use of French notation for denoting tableaux, so that the first row of a tableaux is depicted as 
 being the bottommost row. 

\begin{example}
 The standard Young tableaux of size $3$ are as below. 
\begin{equation}\label{sYt3}
 \Yvcentermath1 \Yboxdim{16pt} 
 \young(3,2,1) \ \ \ \ \ \ \ \ \ \ \ \ \ \ 
 \Yvcentermath1 \Yboxdim{16pt} 
 \young(3,12) \ \ \ \ \ \ \ \ \ \ \ \ \ \ 
 \Yvcentermath1 \Yboxdim{16pt} 
 \young(2,13) \ \ \ \ \ \ \ \ \ \ \ \ \ \ 
 \Yvcentermath1 \Yboxdim{16pt} 
 \young(123) 
 \end{equation} 
 The tableaux displayed in \eqref{sYt3} 
 illustrate the special case of the Frobenius--Young identity whereby 
 $$ 3! = \Big( \Yvcentermath1 \Yboxdim{6.5pt} f^{\young(\null,\null,\null)} \Big)^{2} 
 + \Big( \Yvcentermath1 \Yboxdim{6.5pt} f^{\young(\null,\null\null)} \Big)^{2} 
 + \Big( \Yvcentermath1 \Yboxdim{6.5pt} f^{\young(\null\null\null)} \Big)^{2}. $$
\end{example}

\subsection{Semisimple algebras and Young's construction}
 Let $A$ be a finite-dimensional 
 $\mathbbm{k}$-algebra for a field $\mathbbm{k}$. Suppose there exists a family 
 consisting of nonzero elements $ e_{i, j}^{\lambda} \in A$, for $\lambda$
 in some set $I_{1}$, where, for $\lambda \in I_{1}$, 
 the expression 
 $ e_{i, j}^{\lambda}$ is defined for $i$ and $j$ in some index set $I_{2}(\lambda) = I_{2}$, 
 and where such expressions (whenever defined) satisfy 
\begin{equation}\label{matrixunitrules}
 e_{i_{1}, i_{2}}^{\lambda} e_{i_{3}, i_{4}}^{\mu} 
 = \begin{cases} 
 e_{i_{1}, i_{4}}^{\lambda} & \text{if $i_{2} = i_{3}$ and $\lambda = \mu$, and} \\
 0 & \text{otherwise.} 
 \end{cases} 
\end{equation}
 We refer to the multiplication rules in \eqref{matrixunitrules} as the \emph{matrix unit multiplication rules}, since, if we let $E_{i, 
 j}^{(n)}$ for $i, j \in \{ 1, 2, \ldots, n \}$ denote the $n \times n$ matrix with the value of $1$ in its $(i, j)$-entry and with $0$-values 
 everywhere else, then we find that $E_{i_{1}, i_{2}}^{(n)} E_{i_{3}, i_{4}}^{(n)} = E_{i_{1}, i_{4}}^{(n)} $ if $i_{2} = i_{3}$ and that $E_{i_{1}, 
 i_{2}}^{(n)} E_{i_{3}, i_{4}}^{(n)} $ equals the $n \times n$ zero matrix, otherwise, with matrices of this form being referred to as 
 \emph{matrix units}. The orthogonality relations in \eqref{matrixunitrules} for the case whereby the superscripts are unequal may 
 be thought of as being given by the case whereby matrix units are taken from different components in a direct sum of matrix algebras. 

 Given a basis of $A$ or for an $A$-module under the action of left- or right-multiplication by elements of $A$ satisfying the matrix 
 unit multiplication rules, we refer to the elements in such a basis as \emph{matrix units}, and this is consistent with Young's terminology, 
 with regard what is referred to as \emph{Young's construction} \cite[\S1]{GarsiaEgecioglu2020}. We refer to any basis of $A$ consisting 
 of elements $e_{i, j}^{\lambda}$ satisfying \eqref{matrixunitrules} as a \emph{matrix unit basis}. 

 Again with reference to the work of Garsia and E\u gecio\u glu \cite[\S1]{GarsiaEgecioglu2020}, if $A$ has a matrix unit basis, then, 
 according to our notation associated with \eqref{matrixunitrules}, we obtain the direct sum decomposition 
\begin{equation}\label{Adecomposition}
 A \cong \bigoplus_{\lambda \in I_{1}} \mathcal{M}_{\left| I_{2}(\lambda) \right|}(\mathbbm{k}). 
\end{equation}
 Algebras that are isomorphic to (finite) direct sums of full matrix algebras are said to be \emph{semisimple}. The equivalence in 
 \eqref{Adecomposition} gives us that 
\begin{equation}\label{dimAindexsets}
 \dim A = \sum_{\lambda \in I_{1}} \left| I_{2}(\lambda) \right|^{2}, 
\end{equation}
 and Young's construction of a matrix unit basis for symmetric group algebras gives us, from the identity in \eqref{dimAindexsets}, a 
 proof of the Frobenius--Young identity. 

 Young introduced an explicit construction of matrix units for all symmetric group algebras, and we refer to Garsia and E\u gecio\u glu's 
 exposition on this matrix unit construction \cite[\S1]{GarsiaEgecioglu2020}. Omitting details, for a tableau $T$ labeled with consecutive 
 integers starting with $1$ (i.e., an \emph{injective tableau}) of a given shape $\lambda \vdash n$, we define $$ P(T) = \sum_{\alpha \in 
 R(T)} \alpha \ \ \ \text{and} \ \ \ N(T) = \sum_{\beta \in C(T)} \text{sign}(\beta) \beta $$ for the row and column groups $R(T)$ and 
 $C(T)$ associated with $T$ (consisting of permutations that leave the labels of the rows and columns, respectively, of $T$ invariant up to 
 reorderings). We also let 
 $\sigma_{T_{1}, T_{2}}$ denote the unique permutation such that 
 $T_{1} = \sigma_{T_{1}, T_{2}} T_{2}$ for injective tableaux $T_{1}$ and $T_{2}$. 
 By then letting 
 $S_{1}^{\lambda} <_{YFLO} S_{2}^{\lambda} <_{YFLO} \cdots 
 <_{YFLO} S_{f^{\lambda}}^{\lambda} $ 
 denote the standard Young tableaux of shape $\lambda$
 ordered according to Young's First Letter Order (whereby 
 one injective tableau precedes another if the first entry of disagreement is lower), 
 and by setting 
$ \gamma_{i}^{\lambda} = \frac{f^{\lambda}}{n!} N\big( S_{i}^{\lambda} \big) 
 P \big( S_{i}^{\lambda} \big)$, 
 and by writing $\sigma_{S_{i}^{\lambda}, S_{j}^{\lambda}} = \sigma_{i, j}^{\lambda}$, the set of all expressions of the form
\begin{equation*}
 e_{i, j}^{\lambda} = \sigma_{i, j}^{\lambda} \gamma_{j}^{\lambda} 
 \left( 1 - \gamma_{j+1}^{\lambda} \right) \left( 1 - \gamma_{j+2}^{\lambda} \right) 
 \cdots \left( 1 - \gamma_{f^{\lambda}}^{\lambda} \right). 
\end{equation*}
 is a matrix unit basis for $\mathbb{C}S_n$. 

\subsection{Immaculate functions and immaculate tableaux}
 By analogy with \eqref{Symh1h2}, we write 
\begin{equation}\label{NSymH1H2}
 \textsf{NSym} := \mathbb{Q} \langle H_{1}, H_{2}, \ldots \rangle 
\end{equation}
 to define the $\mathbb{Q}$-algebra version of the \emph{algebra of noncommutative symmetric functions}, as introduced in a seminal 
 paper by Gelfand et al.\ \cite{GelfandKrobLascouxLeclercRetakhThibon1995}. By setting the degree of the generator $H_{n}$ as $n$ for 
 positive integers $n$, the definition in \eqref{NSymH1H2} gives us that $\textsf{NSym}$ is the free $\mathbb{Q}$-algebra is one 
 generator in each degree. An integer composition $\alpha$ is a finite tuple of positive integers, and we let $\ell(\alpha)$ denote the 
 number of entries of $\alpha$, writing $\alpha = (\alpha_{1}, \alpha_{2}, \ldots, \alpha_{\ell(\alpha)})$. For a nonempty composition $ 
 \alpha$, we write $|\alpha|$ in place of the sum of the entries of $\alpha$, and, if $|\alpha| = n$ for a positive integer $n$, we may 
 write $\alpha \vDash n$. We adopt the conventions whereby 
 the empty tuple $()$ is the unique composition of $0$. From the definition in \eqref{NSymH1H2}, we find that the 
 bases of $\textsf{NSym}$ are indexed by compositions in a natural way, writing $H_{\alpha} = H_{\alpha_{1}} H_{\alpha_{2}} \cdots 
 H_{\alpha_{\ell(\alpha)}}$ for a nonempty composition $\alpha$, and writing $H_{()} = H_{0} = 1$. This gives rise to the 
 \emph{complete homogeneous basis} $\{ H_{\alpha} \}_{\alpha \in \mathcal{C}}$ of $\textsf{NSym}$, for the set $\mathcal{C}$ of all 
 integer compositions. A main object of study in this paper is given by a family of combinatorial objects 
 (as defined below) arising from an analogue for $ 
 \textsf{NSym}$ of commutative Schur functions. 

\begin{definition}\label{definitionBergtableau}
 (Berg et al.) For compositions $\alpha$ and $\beta$, an \emph{immaculate tableau} 
 of shape $\alpha$ and content $\beta$ is a tableau of the specified composition shape 
 labelled with positive integers so that the following axioms hold: 
\begin{enumerate}

\item The number of cells labeled with $i$ is $\beta_{i}$; 

\item Each row is weakly increasing (when read from left to right); and 

\item The first column is strictly increasing (when read from top to bottom). 

\end{enumerate}

\noindent Moreover, an immaculate tableau is \emph{standard} if it is of content
 $1^{|\alpha|}$ \cite{BergBergeronSaliolaSerranoZabrocki2014}. 
\end{definition}

 By writing $\mathcal{K}_{\alpha, \beta}$ in place of the number of immaculate tableaux
 of shape $\alpha$ and content $\beta$, the immaculate function 
 $\mathfrak{S}_{\alpha}$ may be defined 
 via the expansion rule such that
\begin{equation}\label{Htoimmaculate}
 H_{\beta} = \sum_{\alpha \geq_{\ell} \beta} \mathcal{K}_{\alpha, \beta} \mathfrak{S}_{\alpha}, 
\end{equation}
 by analogy with \eqref{htosrule}, letting 
 the lexicographic ordering on words and compositions be denoted with $\leq_{\ell}$. 
 This allows us to define the \emph{immaculate basis} $\{ \mathfrak{S}_{\alpha} \}_{\alpha \in \mathcal{C}}$ 
 of $\textsf{NSym}$ \cite{BergBergeronSaliolaSerranoZabrocki2014}. 

\begin{example}
 Inputting 
\begin{verbatim}
from sage.combinat.ncsf_qsym.combinatorics import number_of_fCT
number_of_fCT(Composition([1,1,1,1,1]), Composition([2,1,2]))
\end{verbatim}
 into {\tt SageMath}, we find that the number 
 $\mathcal{K}_{(2, 1, 2), (1, 1, 1, 1, 1)}$ of standard immaculate tableaux of shape 
 $(2, 1, 2)$ is $4$, and this is illustrated below. 
\begin{equation*}
 \Yvcentermath1 \Yboxdim{16pt} 
 \young(35,2,14) \ \ \ \ \ \ \ \ \ \ \ \ \ \ 
 \Yvcentermath1 \Yboxdim{16pt} 
 \young(34,2,15) \ \ \ \ \ \ \ \ \ \ \ \ \ \ 
 \Yvcentermath1 \Yboxdim{16pt} 
 \young(45,2,13) \ \ \ \ \ \ \ \ \ \ \ \ \ \ 
 \Yvcentermath1 \Yboxdim{16pt} 
 \young(45,3,12) 
 \end{equation*} 
 Observe that in the second displayed tableau, the labels in the second column are not increasing (from bottom to top). 
\end{example}

\section{Immaculate algebras} 
 Let the integer sequence $(a(n) : n \in \mathbb{N}_{0} )$ be defined recursively so that $a(0) = 1$ and so that 
\begin{equation}\label{anrec}
 a(n) = \sum_{k=0}^{n-1} \binom{n-1}{k}^{2} a(k), 
\end{equation}
 with 
\begin{equation}\label{anumerical}
 (a(n) : n \in \mathbb{N}_{0} ) 
 = (1, 1, 2, 7, 35, 236, 2037, 21695, 277966, 4198635, 73558135, \ldots), 
\end{equation}
 noting the apparent agreement with \eqref{dimnumerical}. The sequence in \eqref{anumerical} is indexed in the the On-Line 
 Encyclopedia of Integer Sequences as {\tt A101514}, and, as noted in this OEIS entry, we have previously conjectured that 
\begin{equation}\label{conjectured}
 \sum_{\alpha \vDash n} ( g^{\alpha} )^{2} = a(n) 
\end{equation}
 holds for all $n \in \mathbb{N}$. We prove \eqref{conjectured} bijectively, as below. 

\begin{theorem}\label{theoremconjectured}
 For all positive integers $n$, the relation in \eqref{conjectured} holds. 
\end{theorem}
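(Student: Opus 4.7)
The plan is to prove the theorem by induction on $n$ by showing that $f(n) := \sum_{\alpha \vDash n} (g^{\alpha})^{2}$ satisfies the same initial condition and recurrence as $a(n)$. The base case $f(0) = 1 = a(0)$ is immediate. For the inductive step it suffices to construct a bijection
\[
\Phi \colon \bigsqcup_{\alpha \vDash n} \mathrm{SIT}(\alpha)^{2} \;\longleftrightarrow\; \bigsqcup_{k=0}^{n-1} \binom{[n-1]}{k}^{2} \times \bigsqcup_{\beta \vDash k} \mathrm{SIT}(\beta)^{2},
\]
where $\mathrm{SIT}(\gamma)$ denotes the set of standard immaculate tableaux of composition shape $\gamma$; by the inductive hypothesis and \eqref{anrec}, the cardinality of the target is then $\sum_{k=0}^{n-1}\binom{n-1}{k}^{2} a(k) = a(n)$.

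The key observation is that for any pair $(P, Q)$ of common shape $\alpha = (\alpha_{1}, \ldots, \alpha_{\ell}) \vDash n$, the bottom row of both $P$ and $Q$ has length $\alpha_{1}$ and must begin with the label $1$, since the strictly increasing first column of any standard immaculate tableau is forced to start with the minimum of $[n]$. The remaining $\alpha_{1} - 1$ entries of row $1$ in $P$ (respectively in $Q$) form subsets of $\{2, \ldots, n\}$ of equal size, which under the order-preserving shift $x \mapsto x - 1$ we identify with $A, B \in \binom{[n-1]}{\alpha_{1} - 1}$. I then define $\Phi(P, Q) := (A, B, (P', Q'))$ by letting $P'$ and $Q'$ be the tableaux obtained from $(P, Q)$ by deleting row $1$ and relabeling the remaining entries via order-preserving bijections onto $[k]$, where $k := n - \alpha_{1}$; both $P'$ and $Q'$ are standard immaculate tableaux of the common shape $\beta := (\alpha_{2}, \ldots, \alpha_{\ell}) \vDash k$.

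The inverse $\Phi^{-1}$ re-labels $P'$ via the order-preserving bijection from $[k]$ onto $\{2, \ldots, n\} \setminus (A + 1)$ and prepends a new bottom row $\{1\} \cup (A + 1)$, and does likewise with $B$ and $Q'$. All of the required SIT axioms are then verified essentially for free: the sub-column $(c_{2}, \ldots, c_{\ell})$ remains strictly increasing when row $1$ is deleted or reinserted, and the new bottom-row entry $c_{1} = 1$ is strictly less than every entry of the relabeled $P'$, since these all lie in $\{2, \ldots, n\}$. Counting under this bijection, grouping contributions by $k = n - \alpha_{1}$, and invoking the binomial symmetry $\binom{n-1}{\alpha_{1} - 1} = \binom{n-1}{k}$, then yields $f(n) = \sum_{k=0}^{n-1} \binom{n-1}{k}^{2} f(k)$, closing the induction.

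The main conceptual hurdle is identifying the correct feature of $(P, Q)$ to strip. The most natural first attempt --- to strip the row containing the maximum entry $n$ --- fails, because the row of $n$ can occupy different positions in $P$ and in $Q$ and hence have different lengths, so the two extracted subsets would not satisfy $|A| = |B|$. Stripping the bottom row bypasses this issue entirely, because row $1$ has length $\alpha_{1}$ in both tableaux by definition of the common shape, and always begins with $1$.
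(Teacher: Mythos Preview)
Your proof is correct and takes essentially the same approach as the paper: both arguments strip the bottom row of each tableau in the pair, record its non-$1$ entries (shifted down by one) as equal-sized subsets of $[n-1]$, and standardize what remains to obtain a smaller pair of standard immaculate tableaux; the paper merely writes this bijection in the inverse (building-up) direction. There is a harmless indexing slip in your displayed target---with $k=n-\alpha_1$ the subsets $A,B$ land in $\binom{[n-1]}{n-1-k}$ rather than $\binom{[n-1]}{k}$---but you correctly account for this via binomial symmetry, so the argument goes through.
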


\begin{proof}
 The base case for $n = 1$ holds in an immediate way, and we assume that $ \sum_{\alpha \vDash m} ( g^{\alpha} )^{2} = a(m)$ holds 
 for all positive integers $m$ such that $m \leq n$, inductively. So, it remains to prove that this implies that 
\begin{equation*}
 \sum_{\alpha \vDash n + 1} ( g^{\alpha} )^{2} = a(n + 1). 
\end{equation*}
 From the recursive definition of the $a$-sequence in \eqref{anrec}, it remains to prove that our inductive hypothesis implies that 
\begin{equation}\label{wantIHimplies}
 \sum_{k=0}^{n} a(k) \binom{n}{n-k}^{2} = \sum_{\alpha \vDash n + 1} ( g^{\alpha} )^{2}, 
\end{equation}
 noting the use of the symmetry of binomial coefficients. We proceed to define a mapping 
\begin{equation}\label{phicolon}
 \varphi_{n}\colon \mathcal{A}_{n} \to \mathcal{B}_{n}, 
\end{equation}
 as below, where the domain $\mathcal{A}_{n}$ consists of ordered pairs $(T_{1}, T_{2}, S_{1}, S_{2})$ consisting of standard immaculate 
 tableaux $T_{1}$ and $T_{2}$ of the same shape $\beta \vDash k$ (adopting the convention whereby there is a unique ``empty tableau'' 
 $ \boxminus$ with $0$ cells) and subsets $S_{1}$ and $S_{2}$ of $\{ 1, 2, \ldots, n \}$ of the same size $n - k$, and where the codomain 
 $\mathcal{B}_{n}$ consists of ordered pairs $(T_{3}, T_{4})$ of standard immaculate tableaux of the same shape $\alpha \vDash n+1$. 

 As above, let $(T_{1}, T_{2}, S_{1}, S_{2}) \in \mathcal{A}_{n}$. For $i \in \{ 1, 2 \}$, let the 
 $n - k$ elements in $S_{i}$ be ordered by writing 
 $$ S_{i} = \left\{ s_{1}^{(i)} < s_{2}^{(i)} < \cdots < s_{n-k}^{(i)} \right\}. $$
 We also write $t_{j}^{(i)} = s_{j}^{(i)} + 1$ for all $j \in \{ 1, 2, \ldots, n - k \}$, and we write
 $$ \overline{S_{i}} = \left\{ t_{1}^{(i)} < t_{2}^{(i)} < \cdots < t_{n-k}^{(i)} \right\}. $$
 For each of $i \in \{ 1, 2 \}$, we form the tableau 
\begin{equation}\label{displayUi}
 U_{i} = \Yvcentermath1 \Yboxdim{20pt} \young(1{t_{1}^{(i)}}{t_{2}^{(i)}}{\cdots}{t_{n-k}^{(i)}}). 
 \end{equation} 
 As above, we let $T_{1}$ and $T_{2}$ be of the same shape $\beta \vDash k$. For each of $i \in \{ 1, 2 \}$, we form a tableau $V_{i}$ 
 by relabelling $T_{i}$ by replacing the consecutive labels $1$, $2$, $\ldots$, $k$ with, in order, the entries in the tuple 
 obtained by ordering $\{ 2, 3, \ldots, n + 1 \} \setminus \overline{S_{i}}$. We then, again letting $i \in \{ 1, 2 \}$, form a tableau $T_{i + 2}$ by 
 adjoining $U_{i}$ and $V_{i}$, i.e., by placing $V_{i}$ on top of $U_{i}$ to form a new tableau with $n + 1$ labeled cells. 
 We then let \eqref{phicolon} be defined so that 
\begin{equation}\label{phidefineT1T2}
 \varphi_{n}(T_{1}, T_{2}, S_{1}, S_{2}) = (T_{3}, T_{4}). 
\end{equation}
 We see that $\varphi_{n}(T_{1}, T_{2}, S_{1}, S_{2}) $ is indeed a pair of standard immaculate tableaux of the same shape with $n + 
 1$ cells, since, by construction, for each of $i \in \{ 1, 2 \}$, the first column of $T_{i + 2}$ begins with $1$ and then strictly increases 
 according to how the first column of $T_{i}$ is relabelled with (certain) elements in $\{ 2, 3, \ldots, n + 1 \} \setminus \overline{S_{i}}$ 
 in increasing order, and since there are no restrictions on the orderings of the labels in the remaining columns, and since the first row 
 in \eqref{displayUi} is strictly increasing, and since the shifted rows of $V_{i}$ are strictly increasing, and since the number of cells in 
 $T_{i+2}$ is equal to the number $n - k + 1$ of cells in $U_{i}$ plus the number $k$ of cells in $V_{i}$ (or in $T_{i}$). We claim that 
 the mapping defined in \eqref{phidefineT1T2} is bijective. 

 Let $(T_{1}, T_{2}, S_{1}, S_{2})$ and $(T_{1}', T_{2}', S_{1}', S_{2}')$ be distinct elements in the domain in \eqref{phicolon}, and write 
 $$ \varphi_{n}(T_{1}', T_{2}', S_{1}', S_{2}') = (T_{3}', T_{4}'). $$ For each of $i \in \{ 1, 2 \}$, if $S_{i} \neq S_{i}'$, then the first row of 
 $T_{i + 2}$ will not be the same as the first row of $T_{i + 2}'$, according to \eqref{displayUi}. Now, suppose that $T_{i} \neq T_{i}'$, 
 again letting $i \in \{ 1, 2 \}$. If $T_{i}$ and $T_{i}'$ are not of the same shape, then $T_{i+2}$ and $T_{i+2}'$ will not be of 
 the same shape, according to how $T_{i+2}$ is formed by adjoining $U_{i}$ and $V_{i}$, and similarly for $T_{i+2}'$. If $T_{i}$ and 
 $T_{i}'$ are of the same shape but $S_{i} \neq S_{i}'$, then $T_{i+2}$ would not have the same initial row as $T_{i+2}'$, as shown 
 previously. So, it remains to consider the case whereby $T_{i}$ and $T_{i}'$ are of the same shape (but with labels arranged in different 
 ways by assumption that $T_{i} \neq T_{i}'$) and $S_{i} = S_{i}'$. As above, let $T_{i}$ consist of $k$ cells. We thus have that there 
 is a non-identity permutation $\sigma$ of $\{ 1, 2, \ldots, k \}$ such that $T_{i} = \sigma T_{i}'$. Since $S_{i} = S_{i}'$, we find that 
 $U_{i} = U_{i}'$, so that $V_{i}$ is obtained by replacing the consecutive labels $1$, $2$, $\ldots$, $k$ of $T_{i}$ with the consecutive 
 entries of the tuple $\mathcal{T}$ obtained by ordering $\{ 2, 3, \ldots, n + 1 \} \setminus \overline{S_{i}}$, and $V_{i}'$ is obtained by 
 replacing the consecutive labels $1$, $2$, $\ldots$, $k$ of $T_{i}'$ with the consecutive entries of the same tuple $\mathcal{T}$, noting 
 that the same bijective relabelling function $f\colon \{ 1, 2, \ldots, k \} \to \{ 2, 3, \ldots, n + 1 \} \setminus \overline{S_{i}}$ is used in 
 both cases, with $j_{1} < j_{2}$ implying that $f(j_{1}) < f(j_{2})$. This gives us that $V_{i} = \rho V_{i}'$ for a non-identity permutation 
 $ \rho$ of $\{ 2, 3, \ldots, n + 1 \} \setminus \overline{S_{i}}$ such that $\rho = f \circ \sigma$. Since $\rho$ is not the identity 
 permutation, we have that $T_{i}$ and $T_{i}'$ are not the same, giving us the injectivity of $\varphi_{n}$. 

 Let $(T_{3}, T_{4})$ denote a pair of standard immaculate tableaux of the same shape
 with $n + 1$ cells. 
 For $i \in \{ 1, 2 \}$, we take the 
 first row $\mathcal{U}_{i}$ of $T_{i+2}$, remove the $1$-labeled cell, and then 
 take the set $\overline{\mathcal{S}_{i}}$ of the remaining labels, and then shift each label downwards by $1$, 
 yielding a subset $\mathcal{S}_{i}$ of $\{ 1, 2, \ldots, n \}$ 
 of size $n - k$ for some $k \in \{ 0, 1, \ldots, n \}$. 
 Again for $i \in \{ 1, 2 \}$, we let $\mathcal{V}_{i}$ 
 denote the truncated version of $T_{i+2}$ obtained by removing the first row of $T_{i+2}$. 
 This truncated tableau $\mathcal{V}_{i}$ is labeled with the elements in $\{ 2, 3, \ldots, n + 1 \} \setminus 
 \overline{\mathcal{S}_{i}}$, 
 and we relabel $\mathcal{V}_{i}$ by replacing the $j^{\text{th}}$ largest label with $j$, 
 yielding a standard immaculate tableau $T_{i}$. 
 By applying the mapping 
 $ \varphi_{n}$ to $(T_{1}, T_{2}, S_{1}, S_{2})$, our construction gives us that 
 that $\mathcal{S}_{i} = S_{i}$ and $\overline{S}_{i} = S_{i}$ 
 and $\mathcal{U}_{i} = U_{i}$ and 
 $\mathcal{V}_{i} = V_{i}$, so that 
 $ \varphi_{n}$ maps $(T_{1}, T_{2}, S_{1}, S_{2})$
 to $(T_{3}, T_{4})$. 
\end{proof}

\begin{example}
 To illustrate the bijection associated with the case of \eqref{wantIHimplies} whereby 
\begin{equation}\label{a4binomialsum}
 a(4) = a(0) \binom{3}{3}^{2} + a(1) \binom{3}{2}^{2} + 
 a(2) \binom{3}{1}^2 + 
 a(3) \binom{3}{0}^2, 
\end{equation}
 we can think of the initial term $ a(0) \binom{3}{3}^{2} = 1$ as corresponding to 
 the ordered $4$-tuple shown in the correspondence whereby 
\begin{equation}\label{labelhighlight}
 \left( \boxminus, \boxminus, \{ 1, 2, 3 \}, \{ 1, 2, 3 \} \right) \mapsto 
 \Bigg( \raisebox{-0.8ex}{$ \begin{ytableau}
 *(green) 1 & *(white) 2 & *(white) 3 & *(white) 4 
\end{ytableau}, \ 
\begin{ytableau}
 *(green) 1 & *(white) 2 & *(white) 3 & *(white) 4 
\end{ytableau} $} \Bigg), 
\end{equation}
 where the highlighted cells among the tableaux in the tuple shown on the right of \eqref{labelhighlight} are meant to 
 illustrate the definition of \eqref{phicolon}, 
 in the sense that the members of $S_{i}$ for $i \in \{ 1, 2 \}$ can be thought of as being 
 ordered and then placed to the right of the lowest left cell in 
 the tableau $T_{i}$ 
 and then having their labels shifted upwards by $1$. 
 The term $ a(1) \binom{3}{2}^{2} = 9$ in the expansion in \eqref{a4binomialsum}
 is in correspondence with the $9$ pairs $(T_{3}, T_{4})$ of order-$4$ standard immaculate tableaux 
 of the same shape with $2$ cells in the bottom row apart from 
 the $1$-labeled cell, as illustrated with 
 \begin{equation*}
 \Bigg( \raisebox{-0.8ex}{$ \begin{ytableau} 
 *(white) 1 
 \end{ytableau}, \ 
 \begin{ytableau} 
 *(white) 1 
 \end{ytableau}, $} \ \{ 2, 3 \}, \ \{ 1, 2 \} \Bigg) \mapsto 
 \Bigg( \, \raisebox{0.5ex}{$ \begin{ytableau} 
 *(white) 2 \\ 
 *(green) 1 & *(white) 3 & *(white) 4 
\end{ytableau}, \ 
 \begin{ytableau}
 *(white) 4 \\ 
 *(green) 1 & *(white) 2 & *(white) 3 
\end{ytableau} $} \, \Bigg). 
\end{equation*}
 The term $ a(2) \binom{3}{1}^2 = 18 $ in \eqref{a4binomialsum} is in correspondence with the $18$ pairs $(T_{3}, T_{4})$ 
 of order-$4$ standard immaculate tableaux of the same shape
 with $1$ cell in the bottow row apart from the $1$-labeled cell, and this is illustrated with 
 \begin{equation*}
 \Bigg( \raisebox{-0.8ex}{$ \begin{ytableau} 
 *(white) 1 & *(white) 2 
 \end{ytableau}, \ 
 \begin{ytableau} 
 *(white) 1 & *(white) 2 
 \end{ytableau}, $} \ \{ 3 \}, \ \{ 1 \} \Bigg) \mapsto 
 \Bigg( \, \raisebox{0.5ex}{$ \begin{ytableau} 
 *(white) 2 & *(white) 3 \\ 
 *(green) 1 & *(white) 4 
\end{ytableau}, \ 
 \begin{ytableau}
 *(white) 3 & *(white) 4 \\ 
 *(green) 1 & *(white) 2 
\end{ytableau} $} \, \Bigg) 
\end{equation*}
 and with 
 \begin{equation*}
 \Bigg( \raisebox{0.5ex}{$ \begin{ytableau} 
 *(white) 2 \\ 
 *(white) 1 
 \end{ytableau}, \ 
 \begin{ytableau} 
 *(white) 2 \\
 *(white) 1 
 \end{ytableau}, $} \ \{ 1 \}, \ \{ 2 \} \Bigg) \mapsto 
 \left( \, \raisebox{2.4ex}{$ \begin{ytableau} 
 *(white) 4 \\ 
 *(white) 3 \\ 
 *(green) 1 & *(white) 2 
\end{ytableau}, \ 
 \begin{ytableau}
 *(white) 4 \\ 
 *(white) 2 \\ 
 *(green) 1 & *(white) 3 
\end{ytableau} $} \, \right). 
\end{equation*}
 The $ a(3) \binom{3}{0}^2 = 7$ term in \eqref{a4binomialsum} 
 is in correspondence with the $7$ pairs 
 $(T_{3}, T_{4})$ of order-$4$ 
 standard immaculate tableaux of the same shape 
 with $0$ cells in the bottom row apart from the $1$-labeled cell, and this is illustrated with 
 \begin{equation*}
 \Bigg( \raisebox{-0.7ex}{$ \begin{ytableau} 
 *(white) 1 & *(white) 2 & *(white) 3 
 \end{ytableau}, \ 
 \begin{ytableau} 
 *(white) 1 & *(white) 2 & *(white) 3 
 \end{ytableau}, $} \ \varnothing, \ \varnothing \Bigg) \mapsto 
 \left( \, \raisebox{0.8ex}{$ \begin{ytableau} 
 *(white) 2 & *(white) 3 & *(white) 4 \\ 
 *(green) 1 
\end{ytableau}, \ 
 \begin{ytableau}
 *(white) 2 & *(white) 3 & *(white) 4 \\ 
 *(green) 1 
\end{ytableau} $} \, \right). 
\end{equation*}
 and with 
 \begin{equation*}
 \Bigg( \raisebox{0.55ex}{$ \begin{ytableau} 
 *(white) 2 & *(white) 3 \\ 
 *(white) 1 
 \end{ytableau}, \ 
 \begin{ytableau} 
 *(white) 2 & *(white) 3 \\ 
 *(white) 1 
 \end{ytableau}, $} \ \varnothing, \ \varnothing \Bigg) \mapsto 
 \left( \, \raisebox{2.4ex}{$ \begin{ytableau} 
 *(white) 3 & *(white) 4 \\ 
 *(white) 2 \\ 
 *(green) 1 
\end{ytableau}, \ 
 \begin{ytableau}
 *(white) 3 & *(white) 4 \\ 
 *(white) 2 \\ 
 *(green) 1 
\end{ytableau} $} \, \right). 
\end{equation*}
\end{example}

   To construct a family of combinatorial objects to enumerate   the $a$-sequence according to the recursion in \eqref{anrec}, we require 
  the preliminaries given as follows.   For a finite tuple $t$, we let $\ell(t)$ denote its length or the number of its entries,  and we write $t  
  = (t_{1}, t_{2}, \ldots, t_{\ell(t)})$.   A \emph{weak composition} is a finite tuple of nonnegative integers.  Correspondingly, we let a  
 \emph{weak partition} refer to a finite tuple of weakly decreasing nonnegative integers.  

\begin{definition} 
  Let $n$ be a positive integer. Let $\lambda$ be a strictly decreasing weak partition ending with $0$  and with a largest part $\leq n - 
  1$. An \emph{immacutation} of order $n$  is a tuple $t$ of length $2 \ell(\lambda)$ such that $t_{1}$  and $t_{2}$ are subsets of $\{ 1, 
 2, \ldots, n-1 \}$ of size $n - 1 - \lambda_{1}$,   and such that $t_{2i-1}$ and $t_{2i}$ are subsets of $\{ 1, 2, \ldots, \lambda_{i-1} - 1 \}$ 
 of size $\lambda_{i-1} - 1 - \lambda_{i}$, for $i \in \{ 2, 3, \ldots, \ell(\lambda) \}$. 
 We refer to $\lambda$ as the \emph{class} of $t$. 
\end{definition} 

\begin{example}
 Let $n = 4$ and let $\lambda$ denote the weak partition $(3, 2, 0)$. Then 
\begin{equation}\label{firstimmacutation}
 t = \big( \underbrace{\varnothing, \varnothing}_{\lambda_{1} = 3}, 
 \underbrace{\varnothing, \varnothing}_{\lambda_{2} = 2}, 
 \underbrace{ \{ 1 \}, \{ 1 \} }_{\lambda_{3} = 0} \big) 
\end{equation}
 is an immacutation of order $n$, where the groupings of entries suggested in 
 \eqref{firstimmacutation} are meant to illustrate the following. 
 The initial entries $t_{1} = \varnothing$ and $t_{2} = \varnothing$ 
 are subsets of $ \{ 1, 2, \ldots, n - 1 \} = \{ 1, 2, 3 \}$ of 
 size $n - 1 - \lambda_{1} = 0$, 
 and, letting $i = 2$, 
 the next two entries
 $t_{2i-1} = t_{3} = \varnothing$ and $t_{2i} = t_{4} = \varnothing$ 
 are subsets of $\{ 1, 2, \ldots, \lambda_{i-1} - 1 \} = \{ 1, 2 \}$ 
 of size $\lambda_{i-1} - 1 - \lambda_{i} = 0$, and, letting $i = 3$, 
 the next two entries $t_{2i-1} = t_{5} = \{ 1 \}$ and $t_{2i} = t_{6} = \{ 1 \}$
 are subsets of $\{ 1, 2, \ldots, \lambda_{i - 1} - 1 \} = \{ 1 \}$
 of size $\lambda_{i-1} - 1 - \lambda_{i} = 1$. 
\end{example}

\begin{theorem}\label{theoreman}
 For positive integers $n$, the number of immacutations of order $n$ is equal to $a(n)$. 
\end{theorem}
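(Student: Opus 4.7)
The plan is to prove by strong induction on $n$ that $I(n) := \#\{\text{immacutations of order } n\}$ satisfies the same recurrence \eqref{anrec} as $a(n)$, after adopting the convention $I(0) := 1$ (reflecting the unique ``empty'' immacutation). The base case $n = 1$ is immediate: the only eligible class is $\lambda = (0)$, which forces the tuple $t = (\varnothing, \varnothing)$, so $I(1) = 1 = a(1)$.

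For the inductive step, I would partition order-$n$ immacutations by $k := \lambda_1 \in \{0, 1, \ldots, n-1\}$. The key bijective observation is that the map
\[
(t_1, t_2, t_3, \ldots, t_{2\ell(\lambda)}) \longmapsto \bigl((t_1, t_2),\, (t_3, \ldots, t_{2\ell(\lambda)})\bigr)
\]
yields a bijection between order-$n$ immacutations with $\lambda_1 = k$ and triples consisting of two size-$(n-1-k)$ subsets of $\{1, \ldots, n-1\}$ together with an order-$k$ immacutation whose class is $\lambda' := (\lambda_2, \ldots, \lambda_{\ell(\lambda)})$. Indeed, $\lambda'$ is a strictly decreasing weak partition ending in $0$ with $\lambda'_1 = \lambda_2 \leq \lambda_1 - 1 = k - 1$, and the defining conditions on $(t_{2i-1}, t_{2i})$ for $i \geq 2$ translate, under the index shift $i \mapsto i - 1$, verbatim into the defining conditions on the components of an order-$k$ immacutation of class $\lambda'$. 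The degenerate case $k = 0$ collapses to $\lambda = (0)$ and the single tuple $(\{1, \ldots, n-1\}, \{1, \ldots, n-1\})$, which is consistent with the convention $I(0) := 1$.

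Summing over $k$ and using the binomial symmetry $\binom{n-1}{n-1-k} = \binom{n-1}{k}$ gives $I(n) = \sum_{k=0}^{n-1} \binom{n-1}{k}^2 I(k)$, which matches \eqref{anrec}. The inductive hypothesis $I(k) = a(k)$ for $k < n$ then completes the step. There is no serious obstacle here; the argument is essentially bookkeeping that reads the recursion \eqref{anrec} off of the recursive structure already encoded in the definition of an immacutation, with the value $\lambda_1$ playing the role of the summation index $k$ and the two leading subsets $t_1, t_2$ supplying the factor $\binom{n-1}{k}^2$. The only subtleties worth flagging are the convention $I(0) := 1$ needed to absorb the $k = 0$ term uniformly, and the careful verification that the index shift on $\lambda$ really does preserve both the size and the ambient-set constraints in the definition.
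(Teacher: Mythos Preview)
Your proof is correct and takes essentially the same approach as the paper: both arguments peel off the leading pair $(t_1,t_2)$ of an immacutation and identify the tail $(t_3,\ldots,t_{2\ell(\lambda)})$ with an order-$\lambda_1$ immacutation of class $(\lambda_2,\ldots,\lambda_{\ell(\lambda)})$, thereby reading the recursion \eqref{anrec} directly off the definition. The paper phrases the bijection in the forward direction (prepending $(S_1,S_2)$ to a smaller immacutation to build a larger one) while you phrase it as a decomposition, but the underlying map is identical, and your handling of the $k=0$ boundary via the convention $I(0):=1$ is in fact more carefully stated than the paper's.
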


\begin{proof}
 Inductively, we assume that he number of order-$m$ immacutations is equal to $m$ for $m \leq n$, with the base case holding 
 in an immediate way. So, from the defining recurrence for the $a$-sequence in \eqref{anrec}, together with our inductive 
 hypothesis, it remains to prove that 
\begin{equation*}
 \text{$\#$ of order-$(n+1)$ immacutations} = 
 \sum_{k=0}^{n} \binom{n}{n-k}^{2} \big( \text{$\#$ of order-$k$ immacutations} \big). 
\end{equation*}
 Let $t$ denote an immacutation of order $k$ and class $\lambda$ By taking subsets $S_{1}$ and $S_{2}$ of $\{ 1, 2, \ldots, n \}$, 
 with each of $S_{1}$ and $S_{2}$ being of size $n - k$ for some fixed $k \in \{ 0, 1, \ldots, n \}$, we then form a tuple $u$ by 
 concatenating the $2$-tuple $(S_{1}, S_{2})$ and $t$. We then let $\mu$ denote the tuple obtained by concatenating the $1$-tuple 
 $(k)$ and $\lambda$. Since the order-$k$ immacutation $t$ is of class $\lambda$, the initial entry of $\lambda$ satisfies 
 $ \lambda_{1} \leq k - 1$, and any subsequent entries would form a strictly decreasing sequence ending with $0$, so we find that 
 $ \mu$ is a strictly decreasing weak partition ending with $0$ and with largest part $\leq n$. So, we find that $u$ is a tuple of length 
 $2 \ell(\mu)$ such that $u_{1} = S_{1}$ and $u_{2} = S_{2}$ are subsets of $\{ 1, 2, \ldots, n \}$ of size $n - \mu_{1} = n-k$. For $i = 
 2$, we have that $u_{2i-1} = t_{1}$ and $u_{2i} = t_{2}$, and, since $t$ is an immacutation of class $\lambda$ and order $k$, we 
 have that $t_{1}$ and $t_{2}$ are subsets of $\{ 1, 2, \ldots, k - 1 \}$ of size $ k - 1 - \lambda_{1}$, so that $u_{2i-1}$ and $u_{2 i}$ 
 (again for $i = 2$) are subsets of $\{ 1, 2, \ldots \mu_{i-1} - 1 \}$ of size $\mu_{i-1} - 1 - \mu_{2}$. The desired property for higher 
 indices $i$ then holds because $t$ is an immacutation. 
\end{proof}

\begin{example}
 The unique order-$4$ immacutation of class 
 $(3, 2, 1, 0)$ is $$(\varnothing, \varnothing, \varnothing, \varnothing, \varnothing, \varnothing, \varnothing, \varnothing ), $$ 
 and the order-$4$ immacutations of class $(3, 1, 0)$ are 
\begin{align*}
 & (\varnothing, \varnothing, \{ 1 \}, \{ 1 \}, \varnothing, \varnothing), \\ 
 & (\varnothing, \varnothing, \{ 1 \}, \{ 2 \}, \varnothing, \varnothing), \\ 
 & (\varnothing, \varnothing, \{ 2 \}, \{ 1 \}, \varnothing, \varnothing), \ \text{and} \\ 
 & (\varnothing, \varnothing, \{ 2 \}, \{ 2 \}, \varnothing, \varnothing), 
\end{align*}
 and we may confirm that there are $a(4) = 35$ order-$4$ immacutations in total. 
\end{example}

\begin{definition}\label{definitionf}
 Let $\mathcal{B}_{n}$ denote the set consisting of pairs of standard immaculate tableaux of the same shape $\alpha \vDash n$, 
 and let $\mathcal{C}_{n}$ denote the set consisting of immacutations of order $n$. Define 
\begin{equation}\label{displayfcolon}
 f\colon \mathcal{B}_{n} \to \mathcal{C}_{n} 
\end{equation}
 so that, for a pair $(T_{1}^{(1)}, T_{2}^{(1)})$ of standard immaculate tableaux of the same shape $\alpha \vDash n$, we take the 
 (possibly empty) set $S_{1}^{(1)}$ (resp.\ $S_{2}^{(1)}$) of labels to the right of the $1$-cell in the first row of $T_{1}^{(1)}$ (resp.\ 
 $T_{2}^{(1)}$) and we then form a set $\overline{S_{1}^{(1)}}$ (resp.\ $\overline{S_{2}^{(1)}}$) by shifting each member of 
 $S_{1}^{(1)}$ (resp.\ $S_{2}^{(1)}$) downwards by $1$, and we record the sets $\overline{S_{1}^{(1)}}$ and $\overline{S_{2}^{(1)}}$ 
 as consecutive entries to form an initial tuple $t^{(1)}$. We then recursively define $t^{(i)}$ and $T_{1}^{(i)}$ (resp.\ $T_{2}^{(i)}$) as 
 follows. We form $T_{1}^{(i)}$ (resp.\ $T_{2}^{(i)}$) by taking $T_{1}^{(i-1)}$ (resp.\ $T_{2}^{(i-1)}$) and removing its first row and 
 by then relabelling any remaining labels with consecutive integers starting with $1$, according to the increasing order of any labels in 
 $T_{1}^{(i-1)}$ (resp.\ $T_{2}^{(i-1)}$). By taking the (possibly empty) set $S_{1}^{(i)}$ (resp. $S_{2}^{(i)}$) of labels to the right of the 
 $1$-cell in the first row of $T_{1}^{(i)}$ (resp.\ $T_{2}^{(i)}$), we then form a set $\overline{S_{1}^{(i)}}$ (resp.\ $\overline{S_{2}^{(i)}}$) 
 by shifting each member of $S_{1}^{(i)}$ (resp.\ $S_{2}^{(i)}$) downwards by $1$, and we then set $t^{(i)}$ as the concatenation of 
 $t^{(i-1)}$ and the tuple $\big( \overline{S_{1}^{(i)}}, \overline{S_{2}^{(i)}} \big)$. We then let $f$ map $(T_{1}^{(1)}, T_{2}^{(1)})$ 
 to the immacutation $t^{(j)}$ obtained after the given procedure terminates (just before empty tableaux would be obtained from 
 successive truncations). 
\end{definition}

\begin{theorem}\label{theoremBC}
 The mapping in \eqref{displayfcolon} is a bijection from $\mathcal{B}_{n}$ to $ \mathcal{C}_{n}$. 
\end{theorem}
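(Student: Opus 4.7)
The plan is to exhibit an explicit two-sided inverse $g\colon \mathcal{C}_{n} \to \mathcal{B}_{n}$ to $f$. First, I would verify that $f$ is well-defined by identifying the class of the output immacutation. Given $(T_{1}, T_{2}) \in \mathcal{B}_{n}$ of shape $\alpha \vDash n$, set $\lambda_{0} := n$ and $\lambda_{i} := n - (\alpha_{1} + \cdots + \alpha_{i})$ for $i \in \{ 1, 2, \ldots, \ell(\alpha) \}$. Since each $\alpha_{i} \geq 1$, the tuple $\lambda = (\lambda_{1}, \ldots, \lambda_{\ell(\alpha)})$ is a strictly decreasing weak partition that ends with $0$ and has $\lambda_{1} \leq n - 1$. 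In the iterative construction of $f$, the tableau $T_{j}^{(i)}$ has exactly $\lambda_{i-1}$ cells labelled by $\{ 1, 2, \ldots, \lambda_{i-1} \}$, and its first row contains $\alpha_{i}$ cells, so $S_{j}^{(i)} \subseteq \{ 2, \ldots, \lambda_{i-1} \}$ has size $\alpha_{i} - 1$ and $\overline{S_{j}^{(i)}}$ is a subset of $\{ 1, \ldots, \lambda_{i-1} - 1 \}$ of size $\lambda_{i-1} - 1 - \lambda_{i}$, exactly matching the constraints defining an immacutation of order $n$ and class $\lambda$.

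For the inverse, given $t \in \mathcal{C}_{n}$ of class $\lambda$, again set $\lambda_{0} := n$, write $\alpha_{i} := \lambda_{i-1} - \lambda_{i}$, and, for $i \in \{ 1, \ldots, \ell(\lambda) \}$ and $j \in \{ 1, 2 \}$, set $S_{j}^{(i)} := \{ x + 1 : x \in t_{2(i-1) + j} \}$. Starting from $T_{j}^{(\ell(\lambda) + 1)} := \boxminus$, I would build $T_{j}^{(i)}$ recursively for $i$ descending from $\ell(\lambda)$ down to $1$ by relabelling $T_{j}^{(i+1)}$ via the unique order-preserving bijection from $\{ 1, \ldots, \lambda_{i} \}$ onto $\{ 2, 3, \ldots, \lambda_{i-1} \} \setminus S_{j}^{(i)}$, and then placing this relabelled tableau on top of a new first row whose entries, read from left to right, are $1$ followed by the elements of $S_{j}^{(i)}$ in increasing order. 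Define $g(t) := (T_{1}^{(1)}, T_{2}^{(1)})$. Induction on $i$ (descending from $\ell(\lambda)$) shows that each $T_{j}^{(i)}$ is a standard immaculate tableau with label set $\{ 1, \ldots, \lambda_{i-1} \}$ and shape $(\alpha_{i}, \alpha_{i+1}, \ldots, \alpha_{\ell(\lambda)})$, so that in particular $g(t) \in \mathcal{B}_{n}$.

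That $f$ and $g$ are mutually inverse then follows essentially tautologically, since the order-preserving relabelling in $f$ at level $i$ inverts the one used in $g$ at the same level, and each $\overline{S_{j}^{(i)}}$ can be read off the first row of $T_{j}^{(i)}$ in both directions. Alternatively, once the well-definedness of $f$ and the construction of $g$ are in place, one may bypass the verification $f \circ g = \mathrm{id}$ by invoking the equalities $|\mathcal{B}_{n}| = a(n) = |\mathcal{C}_{n}|$ supplied by Theorems \ref{theoremconjectured} and \ref{theoreman}, after which surjectivity of $f$ via the explicit $g$ suffices. The main source of friction is the bookkeeping needed to align the successive relabellings with the indexed subsets $\overline{S_{j}^{(i)}}$; the only nontrivial tableau axiom to verify in the inductive construction of $g$ is that the first column of $T_{j}^{(i)}$ remains strictly increasing, which holds because prepending the label $1$ below a column whose entries are all at least $2$ preserves strict increase.
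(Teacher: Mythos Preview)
Your proposal is correct and essentially carries out in detail what the paper only sketches. The paper's proof of this theorem consists of a single sentence referring back to the bijections in Theorems~\ref{theoremconjectured} and~\ref{theoreman} and ``leaving the details to the reader''; your explicit inverse $g$ is precisely the iteration of the map $\varphi_n$ from Theorem~\ref{theoremconjectured}, and your verification that $f$ lands in $\mathcal{C}_n$ with the stated class mirrors the recursive description of immacutations in Theorem~\ref{theoreman}. Your alternative route (invoking $|\mathcal{B}_n| = a(n) = |\mathcal{C}_n|$ and then checking only surjectivity) is exactly the argument the paper gestures at, so there is no substantive difference in strategy---you have simply made the bookkeeping explicit.
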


\begin{proof}
 This can be shown using the bijection in the proof of Theorem \ref{theoremconjectured} together with the bijective approach applied 
 to prove Theorem \ref{theoreman}, leaving the details to the reader. 
\end{proof}

\begin{example}
 Starting with the pair 
 \begin{equation}\label{startpairBtoC}
 \begin{ytableau} 
 *(white) 4 & *(white) 5 \\ 
 *(white) 3 \\ 
 *(white) 2 & *(white) 6 \\ 
 *(white) 1 
 \end{ytableau} \ \ \ \ \ \ \ \ 
 \begin{ytableau} 
 *(white) 5 & *(white) 6 \\ 
 *(white) 3 \\ 
 *(white) 2 & *(white) 4 \\ 
 *(white) 1 
 \end{ytableau} 
\end{equation}
 of standard immaculate tableaux of the same shape 
 $(1, 2, 1, 2) \vDash 6$, we have, in the notation of 
 Definition \ref{definitionf}, that 
 $$ t_{1} = \left( \varnothing, \varnothing \right), $$ 
 and the truncation-and-relabelling process in Definition \ref{definitionf} 
 applied to the pair in \eqref{startpairBtoC} gives rise to the pair 
 \begin{equation}\label{2202225202622282521224P22M2A}
 \begin{ytableau} 
 *(white) 3 & *(white) 4 \\ 
 *(white) 2 \\ 
 *(white) 1 & *(white) 5 
 \end{ytableau} \ \ \ \ \ \ \ \ 
 \begin{ytableau} 
 *(white) 4 & *(white) 5 \\ 
 *(white) 2 \\ 
 *(white) 1 & *(white) 3 
 \end{ytableau}, 
\end{equation}
 which, in turn, yields the tuple 
 $$ t_{2} = \left( \varnothing, \varnothing, \{ 4 \}, \{ 2 \} \right). $$ 
 An application of the truncation-and-relabelling procedure from 
 Definition \ref{definitionf} to 
 \eqref{2202225202622282521224P22M2A} yields the pair 
 \begin{equation*}
 \begin{ytableau} 
 *(white) 2 & *(white) 3 \\ 
 *(white) 1 
 \end{ytableau} \ \ \ \ \ \ \ \ 
 \begin{ytableau} 
 *(white) 2 & *(white) 3 \\ 
 *(white) 1 
 \end{ytableau}, 
\end{equation*}
 giving rise to the tuple 
 $$ t_{3} = \left( \varnothing, \varnothing, \{ 4 \}, \{ 2 \}, 
 \varnothing, \varnothing \right). $$ 
 Mimicking the above steps, we obtain the pair 
 \begin{equation*}
 \begin{ytableau} 
 *(white) 1 & *(white) 2 
 \end{ytableau} \ \ \ \ \ \ \ \ 
 \begin{ytableau} 
 *(white) 1 & *(white) 2 
 \end{ytableau}, 
\end{equation*}
 and this gives us the tuple 
\begin{equation}\label{t4immacutation}
 t_{4} = \left( \varnothing, \varnothing, \{ 4 \}, \{ 2 \}, 
 \varnothing, \varnothing, \{ 1 \}, \{ 1 \} \right). 
\end{equation}
 Observe that the tuple in \eqref{t4immacutation}
 is an immacutation of order $n = 6$ and of class 
 $\lambda = (5, 3, 2, 0)$. 
 In this direction, we find that 
 $t_{1}$ and $t_{2}$ are subsets of $\{ 1, 2, \ldots, n - 1 \} = \{ 1, 2, 3, 4, 5 \}$ 
 of size $n - 1 - \lambda_{1} = 0$, 
 and, setting $i = 2$, the following entries
 $t_{2i-1} = \{ 4 \}$ and $t_{2i} = \{ 2 \}$ 
 are subsets of $\{ 1, 2, \ldots, \lambda_{i-1} - 1 \} = \{ 1, 2, 3, 4 \}$ 
 of size $\lambda_{i-1} - 1 - \lambda_{i} = 1$, 
 and so forth. 
\end{example}

 By Theorem \ref{theoremBC}, by taking the dimension of both sides of \eqref{CIndefinition}, we obtain a proof of the 
 Frobenius--Young-type identity in \eqref{immacutationsFY}. By letting $\{ e_{T_{1}, T_{2}}^{\alpha} \}$ denote a matrix basis of 
 $ \mathbb{C}\mathcal{I}_{n}$ indexed by pairs $(T_{1}, T_{2})$ of standard immaculate tableaux of the same
 shape $\alpha \vDash n$, 
 Theorem \ref{theoremBC} provides us with an equivalent basis 
 that we denote as $\{ e_{f(T_{1}, T_{2})}^{\alpha} \}$ and that is indexed by order-$n$ immacutations. 

\subsection{The structure of immaculate algebras}
 Since we have focused on the combinatorics underlying the evaluation of $\text{dim} \, \mathbb{C}\mathcal{I}_{n}$
 and the dimensions of the components in the semisimple decomposition of $\mathbb{C}\mathcal{I}_{n}$, 
 this leads us to turn toward considering the structure of $\mathbb{C}\mathcal{I}_{n}$, as below. 

\begin{theorem}\label{theoremcontains}
 For all $n$, the immaculate algebra $\mathbb{C}\mathcal{I}_{n}$
 contains an isomorphic copy of $\mathbb{C}S_{n}$. 
\end{theorem}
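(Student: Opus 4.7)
The plan is to construct an injective algebra homomorphism $\varphi\colon \mathbb{C}S_{n} \hookrightarrow \mathbb{C}\mathcal{I}_{n}$ by matching the Wedderburn decompositions \eqref{decomposeCSn} and \eqref{CIndefinition} componentwise and embedding each matrix block of $\mathbb{C}S_{n}$ into the block of $\mathbb{C}\mathcal{I}_{n}$ indexed by the same shape. The combinatorial engine of the proof is the observation that every integer partition $\lambda \vdash n$ is in particular an integer composition $\lambda \vDash n$, and that every standard Young tableau of partition shape $\lambda$ automatically satisfies the axioms of Definition \ref{definitionBergtableau} for a standard immaculate tableau of the same shape. Indeed, in a standard Young tableau the rows are strictly (hence weakly) increasing and \emph{all} columns are strictly increasing, so in particular the first column is, and the content is $1^{|\lambda|}$ by standardness. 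This yields the inequality $f^{\lambda} \leq g^{\lambda}$ for every $\lambda \vdash n$.

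I would then make $\varphi$ explicit using matrix units. Let $\{e_{i,j}^{\lambda}\}$ denote Young's matrix unit basis of $\mathbb{C}S_{n}$ from Section \ref{sectionbackground}, indexed by ordered pairs $(S_{i}^{\lambda}, S_{j}^{\lambda})$ of standard Young tableaux of the same partition shape $\lambda \vdash n$, and let $\{E_{T_{1}, T_{2}}^{\alpha}\}$ be a matrix unit basis of $\mathbb{C}\mathcal{I}_{n}$ indexed by pairs of standard immaculate tableaux of the same composition shape $\alpha \vDash n$, as afforded by the semisimple structure in \eqref{CIndefinition}. Setting
$$ \varphi\bigl(e_{i,j}^{\lambda}\bigr) := E_{S_{i}^{\lambda}, S_{j}^{\lambda}}^{\lambda} $$
for each $\lambda \vdash n$ and extending by linearity, the map $\varphi$ respects the matrix-unit multiplication rules \eqref{matrixunitrules} --- they are the identical rules on both sides of $\varphi$ --- so $\varphi$ is an algebra homomorphism, and its injectivity is immediate from the linear independence of the distinct matrix units in its image.

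Essentially no obstacle remains beyond the combinatorial inclusion of standard Young tableaux into standard immaculate tableaux; once that is in place, the embedding is forced by the matrix-block structure. Equivalently and more abstractly, for each $\lambda \vdash n$ one embeds $\mathcal{M}_{f^{\lambda}}(\mathbb{C})$ into $\mathcal{M}_{g^{\lambda}}(\mathbb{C})$ as the corner subalgebra of matrices supported on the rows and columns indexed by standard Young tableaux of shape $\lambda$, and then takes the direct sum of these corner embeddings over all $\lambda \vdash n$, landing inside the partition-indexed blocks of $\mathbb{C}\mathcal{I}_{n} = \bigoplus_{\alpha \vDash n} \mathcal{M}_{g^{\alpha}}(\mathbb{C})$. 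The only delicate point worth flagging is that $\varphi$ need not be unital --- the image of the identity of $\mathbb{C}S_{n}$ is an idempotent supported only on the partition blocks --- but this is consistent with the standard reading of the phrase ``contains an isomorphic copy'' in the statement of the theorem.
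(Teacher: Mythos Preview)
Your proposal is correct and follows essentially the same approach as the paper: both arguments rest on the observation that every standard Young tableau of partition shape $\lambda$ is also a standard immaculate tableau of the same shape, yielding $f^{\lambda}\le g^{\lambda}$, and then embed $\bigoplus_{\lambda\vdash n}\mathcal{M}_{f^{\lambda}}(\mathbb{C})$ into $\bigoplus_{\alpha\vDash n}\mathcal{M}_{g^{\alpha}}(\mathbb{C})$ block by block. Your write-up is in fact more explicit than the paper's --- spelling out the matrix-unit map and flagging the non-unital subtlety --- but the underlying idea is identical.
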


\begin{proof}
 For an arbitrary integer partition $\lambda \vdash n$, we see that $f^{\lambda} \leq g^{\lambda}$, since every standard Young 
 tableau of shape $\lambda$ is a standard immaculate tableau of the same shape. So, since 
 $\mathcal{M}_{f^{\lambda}}(\mathbb{C})$ is a subalgebra of $\mathcal{M}_{g^{\lambda}}(\mathbb{C})$, 
 and similarly for direct sums of matrix algebras of the form $\mathcal{M}_{f^{\lambda}}(\mathbb{C})$, 
 we have that 
 $\bigoplus_{\lambda \vdash n} \mathcal{M}_{f^{\lambda}}(\mathbb{C})$ is a subalgebra 
 of $ \bigoplus_{\alpha \vDash n} \mathcal{M}_{g^{\alpha}}(\mathbb{C})$, so that the desired 
 result then follows from the consequence of Young's construction in \eqref{decomposeCSn}. 
\end{proof}

 The connection between immacutations and permutations can be made more explicit
 in the following sense. 
 If we set $b(0) = 1$ and
 $$ b(n) = \sum_{k=0}^{n-1} \frac{ (n-1)! }{k!} b(k) $$
 by direct analogy with \eqref{anrec}, 
 then $b(n) = n!$ for all $n$. 
 This and Theorem \ref{theoremcontains} 
 point toward why 
 it would be desirable to construct a basis $B$ of 
 $\mathbb{C}\mathcal{I}_{n}$ indexed by order-$n$ immacutations 
 in such a way so that the product of two such basis elements 
 is nonzero and a member of the same basis $B$ (or a scalar multiple of an element in $B$), 
 according to a combinatorial rule by analogy with the composition of permutations. 
 This leads us to make use of an analogue of the following result 
 that is due to Hewitt and Zuckermann \cite{HewittZuckerman1955} and that is reproduced 
 in Clifford and Preston's text on the algebraic theory of semigroups \cite[p.\ 167]{CliffordPreston1961}. 
 Recall that a \emph{semigroup} is a set endowed with associative binary operation, 
 and a \emph{monoid} is a semigroup with an identity element. 

\begin{theorem}
 (Hewitt $\&$ Zuckerman) For a semisimple algebra $A$ over a field $F$, 
 if $A \cong F S$ for a finite semigroup $S$, then one of the simple 
 components of $A$ is of order $1$ over $F$, and the converse holds if $F$ is algebraically closed \cite{HewittZuckerman1955}. 
\end{theorem}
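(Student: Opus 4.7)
The plan is to prove the two implications separately, using the augmentation map for the forward direction and an explicit construction from matrix units (with an adjoined zero) for the converse.

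For the forward implication, suppose $A \cong FS$ for a finite semigroup $S$. I would consider the \emph{augmentation map} $\epsilon \colon FS \to F$ defined on the semigroup basis by $\epsilon(s) = 1$ for every $s \in S$ and extended linearly. A direct calculation on basis elements gives $\epsilon(s \cdot t) = 1 = \epsilon(s)\epsilon(t)$, so $\epsilon$ is a surjective algebra homomorphism; unitality follows because if the identity of $FS$ is written as $\sum_{s} c_s s$, then from $1_{FS} \cdot s = s$ one reads off $\sum_{s} c_s = 1$. By semisimplicity of $A$, the kernel of $\epsilon$ has an ideal complement, so $F$ appears as a direct summand of $A$; equivalently, one of the simple components of $A$ is of order $1$ over $F$.

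For the converse, assume $F$ is algebraically closed and decompose $A = F \oplus B$ with $B \cong \bigoplus_{i} \mathcal{M}_{n_i}(F)$, where the distinguished summand $F$ is the given order-$1$ component. Fix a matrix unit basis $T = \bigl\{ e^{(i)}_{j,k} \bigr\}$ of $B$, whose existence uses Wedderburn's theorem together with algebraic closure. Form a finite set $S = T \cup \{ z \}$ equipped with the multiplication that agrees with the matrix unit rules on $T$ and that sends any matrix unit product vanishing in $B$, together with any product involving $z$, to $z$; associativity follows by a short case analysis. To identify $FS$ with $A$, I would define $\Phi \colon FS \to B \oplus F$ by $\Phi \bigl( e^{(i)}_{j,k} \bigr) = \bigl( e^{(i)}_{j,k}, 1 \bigr)$ and $\Phi(z) = (0, 1)$, and verify on basis elements that $\Phi$ respects multiplication: the ``correcting'' second coordinate of value $1$ is exactly what reconciles the fact that a degenerate matrix-unit product in $FS$ equals the basis element $z$, whereas in $B \oplus F$ it equals $(0, 1) = \Phi(z)$. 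A dimension count and the obvious change of basis $\bigl( e^{(i)}_{j,k}, 1\bigr) - (0,1) = \bigl( e^{(i)}_{j,k}, 0 \bigr)$ show that $\Phi$ is bijective.

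The main obstacle is the converse direction, specifically the role of algebraic closure. Over a non-algebraically-closed field, the Wedderburn decomposition of $A$ has simple components of the form $\mathcal{M}_{n_i}(D_i)$ for finite-dimensional division algebras $D_i$ over $F$, which do not admit a matrix unit basis whose span forms a subalgebra in the tidy way the construction requires. A secondary technical point is arranging the multiplication on $S$: one must check that chaining matrix unit compositions never inadvertently produces a nonzero product where $B$ would give zero, which is straightforward but tedious. The forward direction, by contrast, is essentially formal, reducing to the observation that the trivial one-dimensional representation $s \mapsto 1$ of any semigroup lifts to an algebra surjection from $FS$ onto $F$.
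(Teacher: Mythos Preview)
The paper does not prove this theorem; it is quoted as a result of Hewitt and Zuckerman (with a pointer to Clifford and Preston's monograph) and is used only as a tool en route to Theorem~\ref{theoremmonoid}. So there is no ``paper's own proof'' to compare against.

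That said, your argument is sound and is essentially the classical proof. For the forward direction, the augmentation $\epsilon\colon FS\to F$, $s\mapsto 1$, is indeed a surjective algebra map; your unitality check is slightly roundabout, but applying $\epsilon$ to the equation $1_{FS}\cdot s=s$ (which exists because $A$, hence $FS$, is unital) gives $\epsilon(1_{FS})=1$ directly, and then semisimplicity splits off the kernel as a two-sided ideal, yielding the required one-dimensional simple summand. For the converse, your construction is exactly the standard device: adjoin a zero $z$ to the set of matrix units of $B$, so that any ``vanishing'' matrix-unit product lands on $z$; the resulting semigroup is associative by the case check you indicate, and your map $\Phi$ is the usual change of basis showing $FS\cong B\oplus F$. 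The role of algebraic closure is precisely as you identify: it guarantees that every simple component is a full matrix algebra over $F$, so that a matrix-unit basis exists and the adjoin-a-zero trick goes through. One minor comment: for the converse it is worth noting that $|S|=\dim B+1=\dim A$, so the dimension count is immediate, and that $S$ itself need not be a monoid even though $FS$ acquires an identity from $A$; this is consistent with the statement, which only demands a semigroup.
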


 Observe that $g^{\alpha} = 1$ in the cases whereby the composition $\alpha$
 has a vertical or horizontal shape. 
 So, by the Hewitt--Zuckerman Theorem, the semisimple algebra $\mathbb{C}\mathcal{I}_{n}$ 
 has a semigroup algebra structure. 
 To construct an analogue of the composition of permutations, we would want to 
 construct a basis of $\mathbb{C}\mathcal{I}_{n}$ indexed by immacutations 
 in such a way so as to give $\mathbb{C}\mathcal{I}_{n}$ the structure of a monoid algebra. 

\begin{theorem}\label{theoremmonoid}
 For all positive integers $n$, the immaculate algebra $\mathbb{C}\mathcal{I}_{n}$
 has the structure of a monoid algebra. 
\end{theorem}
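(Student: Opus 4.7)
My plan is to derive Theorem~\ref{theoremmonoid} from the Hewitt--Zuckerman Theorem together with the classical procedure of adjoining a formal identity to a semigroup, with the resulting monoid structure transported to immacutations via Theorem~\ref{theoremBC}.

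First I would use the fact that $g^{(n)} = 1$ (the unique standard immaculate tableau of shape $(n)$ being the single row $1\,2\,\cdots\,n$) to peel off a one-dimensional summand and write
$$\mathbb{C}\mathcal{I}_n \cong A' \oplus \mathbb{C}, \qquad A' := \bigoplus_{\alpha \vDash n,\, \alpha \neq (n)} \mathcal{M}_{g^\alpha}(\mathbb{C}).$$
For $n \geq 2$, the algebra $A'$ retains at least one one-dimensional simple component, for instance the one indexed by the column shape $\alpha = (1^n)$ with $g^{(1^n)} = 1$, so the converse direction of the Hewitt--Zuckerman Theorem (applicable since $\mathbb{C}$ is algebraically closed) yields an algebra isomorphism $A' \cong \mathbb{C}S$ for some finite semigroup $S$ of order $\dim A' = a(n) - 1$.

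Next I would form the monoid $M := S \cup \{\star\}$ by adjoining a formal identity $\star$ satisfying $\star \cdot m = m \cdot \star = m$ for every $m \in M$. A short computation with the orthogonal central idempotents $\epsilon_1 := 1_{\mathbb{C}S}$ and $\epsilon_2 := \star - \epsilon_1$ of $\mathbb{C}M$ gives
$$\mathbb{C}M \;=\; \epsilon_1 \mathbb{C}M \;\oplus\; \epsilon_2 \mathbb{C}M \;\cong\; \mathbb{C}S \oplus \mathbb{C} \;\cong\; A' \oplus \mathbb{C} \;\cong\; \mathbb{C}\mathcal{I}_n,$$
so $\mathbb{C}\mathcal{I}_n$ has the structure of a monoid algebra with $|M| = a(n)$; the base case $n = 1$ is immediate since $\mathbb{C}\mathcal{I}_1 \cong \mathbb{C}$ is already a monoid algebra. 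Since the cardinality of $M$ coincides with the number of order-$n$ immacutations by Theorem~\ref{theoreman}, fixing any bijection between $M$ and $\mathcal{C}_n$---most naturally one compatible with the bijection of Theorem~\ref{theoremBC} between pairs of standard immaculate tableaux of the same shape and immacutations---transports the monoid operation to $\mathcal{C}_n$ and yields a basis $\mathcal{B}_n$ of $\mathbb{C}\mathcal{I}_n$ indexed by immacutations and closed under the multiplicative operation of $\mathbb{C}\mathcal{I}_n$.

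The main obstacle I anticipate is not the existence argument itself but rendering the resulting monoid operation on immacutations combinatorially transparent. The Hewitt--Zuckerman construction produces $S$ as a set of modified matrix units of the form $e^\alpha_{T_1, T_2} + z$, together with the idempotent $z$ generating a chosen one-dimensional summand of $A'$, whose product follows the matrix unit rule whenever the pivot tableau matches and collapses to $z$ otherwise. Aligning this with the recursive truncation-and-relabelling procedure of Definition~\ref{definitionf} so as to describe the monoid operation directly in terms of the underlying tuples of integer sets is a technical exercise, but it is not required for the existence statement asserted by Theorem~\ref{theoremmonoid}.
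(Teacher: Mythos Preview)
Your argument is correct. The key step---that adjoining a formal identity $\star$ to a finite semigroup $S$ whose algebra $\mathbb{C}S$ is already unital yields $\mathbb{C}M \cong \mathbb{C}S \oplus \mathbb{C}$---goes through exactly as you indicate via the central orthogonal idempotents $\epsilon_1 = 1_{\mathbb{C}S}$ and $\epsilon_2 = \star - \epsilon_1$, and the Hewitt--Zuckerman converse applies to $A'$ because the column component $(1^n)$ survives after peeling off $(n)$.

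The paper takes a more explicit route: rather than invoking Hewitt--Zuckerman as a black box, it writes down the monoid basis directly, setting $h^{(n)} = \sum_{\alpha} e^{\alpha}_{T,T}$ as the identity, $h^{(1^n)} = e^{(1^n)}$ as an absorbing quasi-identity $\text{qid}$, and $h^{\alpha}_{T_1,T_2} = \text{qid} + e^{\alpha}_{T_1,T_2}$ for all other indices, and then verifies the multiplication table by hand. This is precisely the construction you sketch in your final paragraph (``modified matrix units of the form $e^{\alpha}_{T_1,T_2} + z$''), so the two proofs converge on the same monoid. What your approach buys is brevity and a clean conceptual explanation of why two one-dimensional components are needed; what the paper's approach buys is an explicit multiplication table that can be tabulated for small $n$ without unpacking the Hewitt--Zuckerman proof.
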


\begin{proof}
 The $1$-dimensional case for $n = 1$ holds in an immediate way, so we let $n > 2$. 
 In this case, we have that there are at least two distinct simple 
 components that are of $\mathbb{C}\mathcal{I}_{n}$ and that are of order $1$ over $\mathbb{C}$, 
 and these components correspond to the integer compositions $\alpha \vDash n$
 of vertical and horizontal shapes. 
 Letting $\{ e_{T_{1}, T_{2}}^{\alpha} \}$ denote the same matrix unit basis 
 for $\mathbb{C}\mathcal{I}_{n}$ constructed above, we then define 
 \[ h_{T_{1}, T_{2}}^{\alpha} = \begin{cases} 
 e_{T_{1}, T_{2}}^{(1^n)} & \text{if $\alpha = (1^n)$,} \\
 \sum_{\alpha \vDash n} e_{T, T}^{\alpha} & \text{if $\alpha = (n)$,} \\
 e_{T_{1}, T_{2}}^{(1^n)} + e_{T_{1}, T_{2}}^{\alpha} & \text{otherwise.} 
 \end{cases}
\]
  We find that the family $\{ h_{T_{1}, T_{2}}^{\alpha} \}$ is a basis of  $\mathbb{C} \mathcal{I}_{n}$, by considering the transition matrix 
 given by expanding $h$-expressions into the $e$-basis,  letting compositions be ordered so that  $(1^{n}) < (n)$, yielding a (square) 
 transition matrix with ones along the main diagonal, ones along the first column,  certain $1$-entries along the second row, and zeroes 
 everywhere else,  and this is necessarily of determinant $1$. Since 
\begin{equation}\label{idhorizontal}
 \Yvcentermath1 \Yboxdim{16pt} 
 h_{\young(12{\cdots}n),\young(12{\cdots}n)}^{(n)} 
 = \sum_{\alpha \vDash n} e_{T, T}^{\alpha}
 \end{equation} 
   is the sum of all idempotent matrix units in a matrix unit basis  for $\mathbb{C}\mathcal{I}_{n}$, we write $\text{id}$  in place of the  
  basis element on the left-hand side of    \eqref{idhorizontal}.  We also write $\text{qid}$ (in reference to a ``quasi-identity'' element) in  
 place of the $h$-element  
\begin{equation*}
  \Yvcentermath1 \Yboxdim{16pt} 
 h_{\young(n,{\vdots},2,1),\young(n,{\vdots},2,1)}^{(1^n)} 
 = e_{\young(n,{\vdots},2,1),\young(n,{\vdots},2,1)}^{(1^n)}. 
 \end{equation*} 
 We find that 
 $$ h_{T_{1}, T_{2}}^{\alpha} h_{T_{3}, T_{4}}^{\beta} = h_{T_{1}, T_{2}}^{\alpha} $$
 if $\beta = (n)$ and 
 $$ h_{T_{1}, T_{2}}^{\alpha} h_{T_{3}, T_{4}}^{\beta} = h_{T_{3}, T_{4}}^{\beta} $$
 and $\alpha = (n)$. 
 If $\alpha \neq (n)$ and $\beta \neq (n)$, then 
 \[ h_{T_{1}, T_{2}}^{\alpha} h_{T_{3}, T_{4}}^{\beta} = \begin{cases} 
 \text{qid} & \text{if $T_{2} \neq T_{3}$ or $\alpha \neq \beta$,} \\ 
 h_{T_{1}, T_{4}}^{\alpha} & \text{otherwise.} 
 \end{cases}
\]
 This gives us that $\{ h_{T_{1}, T_{2}}^{\alpha} \}$ is closed under the multiplicative operation
 on $\mathbb{C}\mathcal{I}_{n}$, giving $\{ h_{T_{1}, T_{2}}^{\alpha} \}$ the structure 
 of a monoid under the product operation on $\mathbb{C}\mathcal{I}_{n}$, 
 with $\mathbb{C}\mathcal{I}_{n} $ equal to the monoid algebra
 $\mathbb{C} \{ h_{T_{1}, T_{2}}^{\alpha} \}$. 
\end{proof}

 Let integer compositions of $n$ be linearly ordered so that $(1^{n}) \triangleleft (n)$ and so that $(n) \triangleleft \alpha$ for 
 $\alpha \not\in \{ (1^n), (n) \}$ and so that $\alpha \triangleleft \beta$ for $\alpha, \beta \not\in \{ (1^n), (n) \}$ if and only if 
 $\alpha <_{\ell} \beta$ for the lexicographic ordering $<_{\ell}$ on compositions. The then sort the set of order-$n$ immacutations 
 so that, for two such immacutations $t_1$ and $t_2$, we have that $t_1 < t_{2}$ if and only if the shape of either tableau in the pair 
 $f^{-1}(t_1)$ is strictly less than (with respect to $\triangleleft$) the shape of either tableau in the pair $f^{-1}(t_2)$ or $t_1$ is strictly 
 less than $t_2$ lexicographically, by identifying $\varnothing$ with $0$ and by identifying a set of positive integers with the word 
 obtained by sorting its entries. Disregarding superscripts for $h$-basis elements, we then define the product of $t_1$ and $t_2$ as 
 the inverse image under $f$ of the index of $h_{f^{-1}(t_1)} h_{f^{-1}(t_2)}$. 

\begin{example}
 There are $7$ immacutations of order $3$, which are ordered according to $\triangleleft$ so that 
\begin{align*}
 t_{1} & = (\varnothing, \varnothing, \varnothing, \varnothing, \varnothing, \varnothing), \\ 
 t_{2} & = ( \{ 1, 2 \}, \{ 1, 2 \} ), \\
 t_{3} & = ( \varnothing, \varnothing, \{ 1 \}, \{ 1 \} ), \\ 
 t_{4} & = ( \{ 1 \}, \{ 1 \}, \varnothing, \varnothing ), \\ 
 t_{5} & = ( \{ 1 \}, \{ 2 \}, \varnothing, \varnothing ), \\ 
 t_{6} & = ( \{ 2 \}, \{ 1 \}, \varnothing, \varnothing ), \\
 t_{7} & = ( \{ 2 \}, \{ 2 \}, \varnothing, \varnothing ). 
\end{align*}
 We thus have that $\{ t_1, t_2, \ldots, t_7 \}$ has the structure of a monoid 
 under the operation of $\mathbb{C}\mathcal{I}_{3}$ in the manner described above. Explicitly, 
 a composition table for the monoid $\{ t_1, t_2, \ldots, t_7 \}$ is given below. 
\begin{center}
 \begin{tabular}{ c | c c c c c c c }
 \null & $t_1$ & $t_2$ & $t_3$ & $t_4$ & $t_5$ & $t_6$ & $t_7$ \\ \hline
 $t_1$ & $t_1$ & $t_1$ & $t_1$ & $t_1$ & $t_1$ & $t_1$ & $t_1$ \\ 
 $t_2$ & $t_1$ & $t_2$ & $t_3$ & $t_4$ & $t_5$ & $t_6$ & $t_7$ \\
 $t_3$ & $t_1$ & $t_3$ & $t_3$ & $t_1$ & $t_1$ & $t_1$ & $t_1$ \\
 $t_4$ & $t_1$ & $t_4$ & $t_1$ & $t_4$ & $t_5$ & $t_1$ & $t_1$ \\
 $t_5$ & $t_1$ & $t_5$ & $t_1$ & $t_1$ & $t_1$ & $t_4$ & $t_5$ \\
 $t_6$ & $t_1$ & $t_6$ & $t_1$ & $t_6$ & $t_7$ & $t_1$ & $t_1$ \\
 $t_7$ & $t_1$ & $t_7$ & $t_1$ & $t_1$ & $t_1$ & $t_6$ & $t_7$ 
 \end{tabular}
\end{center}
\end{example}

\section{Conclusion}
 The immaculate algebra $\mathbb{C}\mathcal{I}_{n}$ 
 may be considered in relation to the partition algebra $\mathbb{C}_n(x)$, adapting notation from
 Halverson and Ram \cite{HalversonRam2005}, 
 since, for a suitable parameter $x$, we have that $\mathbb{C}_n(x)$ and 
 $\mathbb{C}\mathcal{I}_{n}$ are both semisimple and 
 both contian $\mathbb{C}S_{n}$ and both contain two inequivalent $1$-dimensional 
 submodules that correspond in natural ways 
 to the $1$-dimensional Sphect modules associated with horizontal and vertical shapes. 
 The partition algebra $\mathbb{C}A_n(x)$ 
 has been considered as a \emph{twisted semigroup algebra} by Wilcox 
 \cite{Wilcox2007}, but it seems that 
 $\mathbb{C}A_n(x)$ has not previously been considered as a semigroup algebra or 
 as a monoid algebra. Using Halverson and Ram's matrix unit construction \cite{HalversonRam2005}, 
 and adapting our proof of Theorem \ref{theoremmonoid} 
 this would give $\mathbb{C}A_n(x)$ the structure of a monoid algebra, 
 and we encourage the exploration of combinatorial properties associated with this monoid algebra. 

\subsection*{Acknowledgements}
 The author thanks Mike Zabrocki and Laura Colmenarejo for a useful discussion at the Fields Institute, 
 and thanks Karl Dilcher for useful discussions. 
 The author was supported by a Killam Postdoctoral Fellowship from the Killam Trusts. 

% \textsc{B.\ C.\ Berndt}, 
% Ramanujan’s Notebooks, Part IV, 
% Springer-Verlag, New York (1994). 

% \textsc{D.\ Borwein and J.\ M.\ Borwein}, 
% On an intriguing integral and some series related to {$\zeta(4)$}, 
% \emph{Proc. Amer. Math. Soc.} {\bf 123(4)} (1995), 1191--1198. 

\end{document}